\newtheorem{lemma}{Lemma}[section]
\newtheorem{theorem}[lemma]{Theorem}
\newtheorem{proposition}[lemma]{Proposition}
\newtheorem{corollary}[lemma]{Corollary}
\theoremstyle{definition}
\newtheorem{definition}[lemma]{Definition}
\newtheorem{remark}[lemma]{Remark}
\numberwithin{equation}{section}
\numberwithin{figure}{section}
\newcommand{\Nset}{\mathcal{N}}
\newcommand{\Sset}{\mathcal{S}}
\newcommand{\Xset}{\mathcal{X}}
\begin{document}

\title{\Large{On the extension of a class of Hermite multivariate interpolation problems}}

\date{}

\author{Hakop Hakopian, \  Anush Khachatryan\\ \\
 \textit{Yerevan State University, Yerevan, Armenia}\\   \textit{Institute of Mathematics of NAS RA}}
\date{}
\maketitle

\maketitle

\begin{abstract}
We characterize the sets of solvability for Hermite multivariate interpolation problems when the sum of multiplicities is at most $2n + 2$, with $n$ the degree of the polynomial space. This result extends an earlier theorem (2000) by one of the authors concerning the case $2n+1$. The latter theorem, in turn, can be regarded as a natural extension of a classical Theorem of Severi (1921). 
\end{abstract}

{\bf Key words:}
Hermite interpolation, fundamental
polynomial, algebraic curve, $n$-correct set,
$n$-solvable set.

{\bf Mathematics Subject Classification (2010):} \\
primary: 41A05, 41A63; secondary 14H50.

\section{Introduction}

In this paper, we characterize the sets of solvability for Hermite multivariate interpolation problems when the sum of multiplicities is at most $2n + 2$, where $n$ denotes the degree of the polynomial space (see Theorem \ref{th:2n+2}). This result extends a previous one (Theorem \ref{th:2n+1}) that addresses the case of $2n+1$. The latter theorem can, in turn, be regarded as a natural extension of a classical Theorem of Severi (Theorem \ref{th:n+1}). 

One should mention that the correctness of Hermite problems was
considered first in algebraic geometry by M. Nagata \cite{N}. In approximation
theory the investigation was initiated by G. G. Lorentz and R. A. Lorentz \cite{L.L.}, \cite{R.L.}.

It should be noted also that a similar study in the case of  Lagrange bivariate interpolation is more advanced (see \cite{HM}).

\subsection{Hermite multivariate interpolation}

Hermite interpolation by multivariate algebraic polynomials is considered.
The interpolation parameters are the values of a function and its partial
derivatives up to some orders at given points.

Denote for $x :=(x_1 ,\ldots, x_k)$ and the multi-index $\alpha:=(\alpha_1,\ldots,\alpha_k)\in \mathbb Z_+^k$ 
$$x^\alpha:=x_1^{\alpha_1},\ldots,x_k^{\alpha_k},\ |\alpha|=\alpha_1+\cdots+\alpha_k,\ \hbox{and}$$ 
$$D^\alpha f:={\frac{\partial^{|\alpha|} f} {\partial x_1^{\alpha_1}\cdots \partial x_k^{\alpha_k}}}.$$
Set for convenience \begin{equation}\label{choose}n^{(k)}:={n+k\choose k}.\end{equation}
The space of polynomials in $k$ variables of total degree
$\le n$ is denoted by 
\begin{equation}\label{dim}\Pi_n^k=\{\sum_{|\alpha|\le n}c_\alpha x^\alpha\},\ \ \dim \Pi_n^k=n^{(k)}.\end{equation}
The coordinates of points and coefficients of polynomials can be taken either real
or complex, thus leading to a real or complex interpolation problem, respectively.

Consider a set $\Xset$ of distinct interpolation points
\begin{equation}\label{set}\Xset:=\Xset_s=\{x^{(1)},\ldots,x^{(s)}\},\ \hbox{where}\ x^{(i)}=(x^{(i)}_1,\ldots,x^{(i)}_k).\ \end{equation}  

Consider also a \emph{scheme}, i.e., a sequence of multiplicities of the points associated with  the degree of interpolating polynomials $n:$ 
$$\Nset:=\Nset^k:=\{n_1,\ldots,n_s; n\}^k.$$
As a rule we assume that the multiplicities in the scheme $\Nset$ are non-increasing:
\begin{equation*}\label{dec}n_1 \ge n_2\ge \cdots\ge n_s.\end{equation*}

In \emph{Hermite interpolation problem}  $(\Nset,\Xset)^k,$ where $k$ is the dimension,
the \emph{interpolation parameters} at a given point $x^{(i)},\ 1\le i\le s,$ are the value of a polynomial and its partial
derivatives up to the order $n_i-1:$ 
\begin{equation}\label{int cond H}
D^{\alpha}p(x^{(i)}) = \lambda_i^\alpha \quad \forall |\alpha|\le n_i-1,\ \  i = 1, \ldots, s.
\end{equation}
The problem of finding a polynomial $p \in \Pi_n^k$ satisfying the above conditions, 
for a data $${\Lambda}:=\{\lambda_i^\alpha \ |\alpha|\le n_i-1,\ 1\le i\le s\},$$ is called  Hermite interpolation problem $(\Nset,\Xset).$

From now on we express briefly the conditions \eqref{int cond H} as follows
$$ D^\Nset p|_\Xset=\Lambda.$$
Points whose multiplicity is equal to one are called \emph{simple}. At these points only the value of a polynomial is interpolated in the conditions \eqref{int cond H}.

It is sometimes convenient, for example in proofs, to allow points of multiplicity $0$ in the problem $(\Nset,\Xset)^k.$ In fact, such a point can be ignored because there is no interpolation condition.

\begin{definition}
A problem  $(\Nset,\Xset)^k$ is called
$n$-\emph{solvable} if for any data $\Lambda$ there exists a
polynomial (not necessarily unique) $p \in \Pi_n^k$, satisfying 
\eqref{int cond H}.
\end{definition}
A polynomial $p \in \Pi_n^k$ is called an $n$-\emph{fundamental polynomial}
for a parameter $D^{\beta}p(x^{(j)}),\ |\beta|\le n_{j}-1,\  1\le j\le s,$ if it is a solution to the problem 
\eqref{int cond H} where $\lambda_i^\alpha=0 \ \ \forall (i,\alpha) \neq (j,\beta)\ \hbox{and}\ \lambda_{j}^{\beta}=1.$

The above fundamental polynomial we denote by 
$$p_{j}^{\star \beta}:=p_{j\Nset\Xset}^{\star \beta}.$$
Let us mention that sometimes we call $n$-fundamental a polynomial, which is a solution to the problem \eqref{int cond H} where $\lambda_i^\alpha=0 \ \ \forall (i,\alpha) \neq (j,\beta)\ \hbox{and}\ \lambda_{j}^{\beta}\neq 0,$
since it equals a non-zero constant times $p_{j}^{\star \beta}.$
\begin{definition}
A problem
$(\Nset,\Xset)^k$ is called
$n$-\emph{independent} if for each parameter in \eqref{int cond H} there exists an $n$-fundamental polynomial.
\end{definition}
The Lagrange formula for the above problem $(\Nset,\Xset)^k,$ giving a solution to the problem \eqref{int cond H}, takes the form:
$$p=\sum_{i=1}^s \sum_{|\alpha|\le n_i-1} \lambda_i^\alpha p_{i}^{\star \alpha}.$$
From here we get:
\begin{proposition} \label{indep} A problem  $(\Nset,\Xset)^k$ is $n$-independent 
if and only if it is $n$-solvable.
\end{proposition}

We have that  the number of interpolation conditions at a point $x^{(i)}$ in \eqref{int cond H} equals to ${(n_i-1)}^{(k)}$ (see \ref{choose}). Hence the total number of the conditions equals to 
\begin{equation}\label{|Nset|}\Nset_\#^k:=\sum_{i=1}^s{(n_i-1)}^{(k)}.\end{equation}
It is easily seen that fundamental polynomials are linearly independent.  Therefore a
necessary condition of $n$-independence, or $n$-solvability, of the problem  $(\Nset,\Xset)^k$ is   
$$\Nset_\#^k \le n^{(k)}.$$
A scheme $\Nset:=\Nset^k$ satisfying the above condition we call 
 \emph{$(\le)$-scheme}.

A scheme $\Nset:=\Nset^k$ satisfying the condition 
$$\Nset_\# ^k< n^{(k)}$$
we call  \emph{$(<)$-scheme}.
\begin{definition}
A problem  $(\Nset,\Xset)^k$ is called
$n$-\emph{correct} if for any data $\Lambda$ there exists a
\emph{unique} polynomial $p \in \Pi_n^k$, satisfying the conditions
\eqref{int cond H}.
\end{definition}
A necessary condition of
$n$-correctness is: 
\begin{equation}\label{exact}\Nset_\#^k = n^{(k)}.\end{equation}
A scheme $\Nset:=\Nset^k$ satisfying the above condition we call  \emph{exact scheme.}
\begin{proposition} \label{correct H}
Assume that a scheme $\Nset^k$ is exact. Then the $n$-correctness of the problem $(\Nset,\Xset)^k$ is equivalent to each of the following conditions:
\begin{enumerate}
\item
The problem $(\Nset,\Xset)^k$ is $n$-solvable.
\item
$p \in \Pi_n^k,\ D^\Nset p|_\Xset=0\implies p = 0.$
\end{enumerate}
\end{proposition}
\begin{proof} Indeed, each of the $\Nset_\#^k$ conditions  in \eqref{int cond H} can be considered as a linear equation with respect to $n^{(k)}$ unknown coefficients of $p\in\Pi_n^k.$ Thus for exact scheme the conditions \eqref{int cond H} reduce to a square linear system (see \eqref{exact}). Now, the condition (i) means that the linear system is consistent  for any right-hand side values, while the condition (ii) means that the homogeneous linear system has a single trivial solution. It is well known that in the case of a square system each of these properties implies that the system has a unique solution for any right-hand side values. The later property means that the problem $(\Nset,\Xset)^k$ is $n$-correct. \end{proof}

\begin{proposition} \label{prop:(<)}
Assume  that in the problem $(\Nset,\Xset)^k,$ the scheme $\Nset^k$ is 
$(<)$-scheme. Then 
there exists $$p \in \Pi_n^k,\ p\neq 0,\ \hbox{and}\ D^\Nset p|_\Xset=0.$$
\end{proposition}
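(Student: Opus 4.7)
The plan is to argue purely by linear algebra, treating the conditions $D^\mathcal{N} p|_\mathcal{X}=0$ as a homogeneous linear system on the finite-dimensional vector space $\Pi_n$.

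First, I would observe that each scalar condition $D^{ij}p(x_k,y_k)=0$, for $i+j\le n_k-1$ and $1\le k\le s$, is a linear functional on the polynomial space $\Pi_n$, since partial differentiation and evaluation at a fixed point are both linear operations. Bundling them together, I define the linear map
\begin{equation*}
L:\Pi_n\longrightarrow \mathbb{K}^{\mathcal{N}_\#},\qquad L(p)=D^\mathcal{N} p|_\Xset,
\end{equation*}
where $\mathbb{K}$ denotes the scalar field and $\mathcal{N}_\#$ is defined by \eqref{|Nset|}. The kernel of $L$ is precisely the set of polynomials in $\Pi_n$ satisfying $D^\mathcal{N} p|_\Xset=0$.

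Next I would apply the rank-nullity theorem: since $\dim\Pi_n=N=\overline{n+1}$ and $\mathrm{rank}\,L\le \mathcal{N}_\#$, we have
\begin{equation*}
\dim\ker L\ge \overline{n+1}-\mathcal{N}_\#.
\end{equation*}
The assumption that $\mathcal{N}$ is a $(<)$-scheme, namely $\mathcal{N}_\#<\overline{n+1}$, forces $\dim\ker L\ge 1$, so $\ker L$ contains a nonzero polynomial $p$, which is exactly the desired conclusion.

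There is no real obstacle here; the statement is a direct application of the dimension count, and all the content is packed into the definition of $\mathcal{N}_\#$ in \eqref{|Nset|} together with the identity $\dim\Pi_n=\overline{n+1}$ already recorded in the paper. The only thing worth double-checking in the write-up is that the conditions in \eqref{int cond H} are indeed genuine linear functionals on $\Pi_n$ (trivially true) and that we are counting them correctly, which is exactly the definition of $\mathcal{N}_\#$.
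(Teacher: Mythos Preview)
Your proof is correct and follows essentially the same approach as the paper: both treat the conditions $D^\Nset p|_\Xset=0$ as a homogeneous linear system with $\Nset_\#<\overline{n+1}=\dim\Pi_n$ constraints, and conclude a nontrivial solution exists by the standard dimension count (you phrase it via rank--nullity, the paper via ``fewer equations than unknowns'').
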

\begin{proof} Indeed, in this case the conditions \eqref{int cond H} reduce to a linear homogeneous system, where the number of equations is less than the number of unknowns. As we know such systems have non-trivial solutions.
\end{proof}
\noindent Thus, for the $(<)$-scheme $\Nset=\{1,1,1,1,1;2\}^2$ we obtain   
\begin{corollary}\label{lem:11111} For any five points in the plane, there exists a conic that passes through all of them. \end{corollary}

For a $(<)$-scheme $\Nset:=\{n_1,\ldots,n_s; n\}^k$ we define the associated exact
scheme
\begin{equation}\label{wideN}\widehat\Nset:=\{n_1,\ldots,n_s,\underbrace{1,\ldots,1}_\text{$r$}; n\}^k,\ \hbox{where}\  r =n^{(k)}-\Nset_\#^k.\end{equation} 
Accordingly, for a set
$\Xset = \{x^{(1)},\ldots, x^{(s)}\},$ we define 
\begin{equation}\label{XcupS}\widehat\Xset: =\Xset\cup \Sset= \{x^{(1)},\ldots,  x^{(s+r)}\},\end{equation}
where $\Sset=\{x^{(s+1)},\ldots, x^{(s+r)}\}$ is a set of $r$ simple points.

\begin{proposition}[\cite{Mys}, Lem.2.1]\label{widehat} The problem $(\widehat\Nset,\widehat\Xset)^k$ is $n$-correct, for some $\widehat\Xset,$ if and only if the
problem $(\Nset,\Xset)^k$ is $n$-solvable.
\end{proposition}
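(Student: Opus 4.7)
The plan is to cast $n$-correctness and $n$-solvability as statements about the linear evaluation map $p\mapsto D^\Nset p|_\Xset$ on $\Pi_n,$ and then exploit Proposition \ref{correct H}. Since $\widehat\Nset$ is by construction an exact scheme with $\widehat\Nset_\#=N,$ the problem $(\widehat\Nset,\widehat\Xset)$ is $n$-correct if and only if the only $p\in\Pi_n$ satisfying $D^\Nset p|_\Xset=0$ and $p(A_{s+j})=0$ for $j=1,\ldots,r$ is $p=0.$ Set $V:=\{p\in\Pi_n\colon D^\Nset p|_\Xset=0\};$ if $(\Nset,\Xset)$ is $n$-solvable then by Proposition \ref{indep} it is $n$-independent, so the evaluation map has rank $\Nset_\#$ and hence $\dim V=N-\Nset_\#=r.$

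For the forward implication, suppose $(\widehat\Nset,\widehat\Xset)$ is $n$-correct for some $\widehat\Xset.$ Given any data $\tilde{\bf c}$ for $(\Nset,\Xset),$ extend it to data for the correct problem by assigning, say, zero values at the added simple nodes; the unique $p\in\Pi_n$ produced by correctness satisfies the original conditions, so $(\Nset,\Xset)$ is $n$-solvable.

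For the reverse implication, assuming $(\Nset,\Xset)$ is $n$-solvable and thus $\dim V=r,$ I would construct the additional points $A_{s+1},\ldots,A_{s+r}$ inductively, cutting the dimension of $V$ down by one at each step. Set $V_0:=V.$ At the $k$-th step, pick any nonzero $q_k\in V_{k-1},$ choose $A_{s+k}$ off the algebraic curve $\{q_k=0\}$ and outside the finite set $\Xset\cup\{A_{s+1},\ldots,A_{s+k-1}\},$ and define $V_k:=\{p\in V_{k-1}\colon p(A_{s+k})=0\}.$ Since $q_k(A_{s+k})\neq 0,$ we have $q_k\notin V_k,$ whence $\dim V_k=\dim V_{k-1}-1.$ After $r$ iterations $V_r=\{0\},$ which is precisely the $n$-correctness criterion of Proposition \ref{correct H} for the exact scheme $\widehat\Nset.$

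The only substantive point is the legitimacy of choosing $A_{s+k}$ at each stage, and this rests on the elementary fact that a nonzero polynomial in $\Pi_n$ has as zero set a proper algebraic curve, whose complement minus finitely many previously used points is always nonempty. Once the kernel/evaluation-map reformulation is in place, the argument reduces to a dimension count combined with this inductive selection of generic points.
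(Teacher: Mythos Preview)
Your proof is correct and follows essentially the same approach as the paper's: both argue inductively, at each step producing a nonzero polynomial in the current kernel $V_{k-1}=\{p\in\Pi_n : D^\Nset p|_\Xset=0,\ p(A_{s+1})=\cdots=p(A_{s+k-1})=0\}$ and choosing the next simple node $A_{s+k}$ off its zero set. The paper phrases the induction in the language of solvability and fundamental polynomials (the newly found polynomial serves as the $n$-fundamental polynomial of $A_{s+k}$, and the old fundamental polynomials are corrected by subtracting a multiple of it), whereas you track the kernel dimension directly; the two viewpoints are equivalent and the key selection step is identical.
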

For the sake of completeness we present: 
\begin{proof}
The direction ``only if" is evident. For the direction ``if", in view of Proposition \ref{correct H}(i), it is enough to prove that there exists a (simple) point $x^{(s+1)}$ such that
the problem $(\Nset',\Xset')^k,$ where $\Xset'=\Xset\cup \{x^{(s+1})\}$
and $\Nset:=\{n_1,\ldots,n_s,1; n\},$ is $n$-solvable. For this, in view of the $n$-solvability of $(\Nset,\Xset)^k,$  we need just to have an $n$-fundamental polynomial of the simple point $x^{(s+1)}.$

Now, according to Proposition \ref{prop:(<)}, there exists
$$p \in \Pi_n^k,\ p\neq 0,\ \hbox{and}\ D^\Nset p|_\Xset=0.$$
Since $p\neq 0$ there is a point: $x^{(s+1)}$ such that $p(x^{(s+1)})\neq 0.$

Then, it is easily seen that the polynomial $p$ is the desired $n$-fundamental polynomial
of $x^{(s+1)}.$
\end{proof}
Denote  $$\Pi_{\Nset\Xset}^k:=\left\{p\in\Pi_n^k\ |\ D^\Nset p|_\Xset=0\right\}.$$
\begin{corollary}\label{dimNX} Suppose that  the
problem $(\Nset,\Xset)^k$ is $n$-solvable. Then we have that
$$\dim\Pi_{\Nset\Xset}^k=n^{(k)}-\Nset_\#^k.$$
 \end{corollary}
\begin{proof}
Assume, in view of Proposition \ref{widehat}, that the problem  
$(\widehat\Nset,\widehat\Xset)^k$ is $n$-correct. Then we readily get that
$$\Pi_{\Nset\Xset}^k=\hbox{\emph{Linear span}}\{p_{i\widehat\Nset\widehat\Xset}^{\star 0}\}_{i=s+1}^{s+r}.$$ 
In view of \eqref{wideN} it remains to note that the fundamental polynomials are linearly independent.
\end{proof}

\begin{remark}[\cite{Hak2}, rem.1.14] \label{rem:XX'}Let $\Xset_s = \{x^{(i)}\}_{i=1}^s$ and $\tilde\Xset_s = \{\tilde x^{(i)}\}_{i=1}^s.$
\begin{enumerate}
\item
If the problem $(\Nset,\Xset)^k$ is $n$-solvable, then so is the problem $(\Nset,\tilde\Xset)^k$  with $\tilde x^{(i)}$ sufficiently close to $x^{(i)},\  i = 1, \ldots, s.$
\item
If the problem $(\Nset,\Xset)^k$ is $n$-solvable, with $x^{(i)}$ all belonging to an $m$-dimensional
linear subspace $L,\ 1\le m<k,$ then so is the problem $(\Nset,\tilde X)^k,$ where the
projection of $\tilde x^{(i)}$ on $L$ is $x^{(i)},\ i = 1,\ldots,s.$
\end{enumerate}
\end{remark}
Until the end of this subsection, we study the bivariate case.

In this case we use the following simplified notation:
$$\Nset:=\Nset^2,\ \ \Nset_\#:=\Nset_\#^2,\ \ (\Nset,\Xset):=(\Nset,\Xset)^2,\ \ \Pi_n:=\Pi_n^2,\ \  \Pi_{\Nset\Xset}:=\Pi_{\Nset\Xset}^2.$$
In this case instead of \eqref{choose} it is more convenient to use the notation
$$\overline n:={(n-1)}^{(2)}=1+\cdots+n.$$
Then the equality \eqref{|Nset|} and the second equality of \eqref{dim} become: 
\begin{equation*}\label{|Nset|2}\Nset_\#:=\sum_{i=1}^s\overline{n_i},\ \ \dim\Pi_n=\overline{n+1}.\end{equation*}

A \emph{plane algebraic curve} is the zero set of some bivariate polynomial of degree $\ge 1.$~To simplify notation, we shall use the same letter,  say $p$,
to denote the polynomial $p$ and the curve given by the equation $p(x,y)=0$.
In particular, by $\alpha$ we denote a linear 
polynomial from $\Pi_1$ and the line defined by the equation
$\alpha(x, y)=0.$ Similarly, by $\beta$ we denote a quadratic 
polynomial from $\Pi_2$ and the conic defined by the equation
$\beta(x, y)=0.$ 

\begin{definition}\label{qNX}Let $(\Nset,\Xset)$ be a problem, where $\Xset$ is a set given by \eqref{set}, and $f$ be a plane curve of degree $m.$ Then we define the scheme $\Delta^f_\Xset$ as follows 
\begin{equation}\label{delta}\Delta^f_\Xset:=\{\delta(1),\ldots,\delta(s);m\},\end{equation}
where $\delta(i)=1$ if $x^{(i)}\in f,$ and $\delta(i)=0$ otherwise.
\end{definition}

Denote by $f_{\Nset,\Xset}^\#$ the number of points from $\Xset$ on the curve $f,$ counted with the multiplicities from $\Nset:$
\begin{equation}\label{N-N+1} f_{\Nset,\Xset}^\#=\sum_{i=1}^s \delta(i)n_i= \sum_{i=1}^s\overline n_i-\sum_{i=1}^s\overline {n_i-\delta(i)}.\end{equation}
Note that this notation, as well as the previous definition, mainly will be used in the cases where the curve $f$ either is a line or a conic.

For $\Nset=\{n_1,\ldots,n_s; n\}$ denote $\Nset-1:=\{n_1-1,\ldots,n_s-1; n-1\}.$

\noindent We will need the following well-known proposition:
\begin{proposition} \label{prop:line}  
Let $\Nset=\{n_1,\ldots,n_s; n\}$ be a scheme with 
$n_1+\cdots+n_s= n+1$ and $\Xset$ be a set of points with $\Xset\subset \alpha^0,$ where $\alpha^0$ is a line. Then 
$$p\in\Pi_n,\ D^\Nset p|_\Xset=0\implies p=\alpha^0 q,$$
where $q\in\Pi_{n-1}$ and $D^{\Nset-1} q|_\Xset=0.$
\end{proposition}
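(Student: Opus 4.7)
The plan is to restrict $p$ to the line $\ell^0$, reduce to a one-variable statement, and then use the fact that a linear form divides any polynomial vanishing on its zero set.

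First I would parametrize the line $\ell^0$ by an affine parameter $t$, writing points on $\ell^0$ as $(x(t),y(t))$ with $x,y$ polynomials of degree $1$ in $t$. Let $t_k$ denote the parameter value with $(x(t_k),y(t_k))=A_k$ for each $A_k\in\Xset$, and consider the univariate restriction $\tilde p(t):=p(x(t),y(t))$. Since $p\in\Pi_n$, we have $\deg \tilde p\le n$.

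Next I would show that $\tilde p$ has a zero of order at least $n_k$ at $t_k$. The hypothesis $D^\Nset p|_\Xset=0$ means that every partial derivative $D^{ij}p(A_k)$ with $i+j\le n_k-1$ vanishes; equivalently, every monomial of total degree $<n_k$ in the Taylor expansion of $p$ about $A_k$ is absent. By the chain rule, each derivative $\tilde p^{(m)}(t_k)$ for $m\le n_k-1$ is a linear combination of partial derivatives $D^{ij}p(A_k)$ with $i+j=m<n_k$, so $\tilde p^{(m)}(t_k)=0$ for $0\le m\le n_k-1$. Hence $(t-t_k)^{n_k}$ divides $\tilde p$, and, the $t_k$ being distinct, $\prod_{k=1}^s (t-t_k)^{n_k}$ divides $\tilde p$. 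This product has degree $n_1+\cdots+n_s=n+1>\deg\tilde p$, forcing $\tilde p\equiv 0$.

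Finally I would conclude that $p$ vanishes identically on the line $\ell^0$, and invoke the standard fact that if a bivariate polynomial vanishes on the zero set of an irreducible linear form $\ell^0$, then $\ell^0$ divides it: one can, for instance, perform an affine change of variables that sends $\ell^0$ to the axis $\{y=0\}$, expand $p$ in powers of $y$, and read off that the constant-in-$y$ term (a univariate polynomial of degree $\le n$) must be identically zero since it has infinitely many roots. Therefore $p=\ell^0 q$ with $q\in\Pi_{n-1}$. The only step requiring care is the chain-rule bookkeeping in the second paragraph, and that is routine since the substitution $(x(t),y(t))$ is affine.
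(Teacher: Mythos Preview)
Your argument is correct: restricting to an affine parametrization of $\ell^0$ turns the Hermite conditions into univariate vanishing of order $n_k$ at each $t_k$, so $\tilde p$ is a polynomial of degree at most $n$ with at least $n+1$ zeros counted with multiplicity, hence identically zero; factoring out $\ell^0$ then follows from the standard divisibility argument you describe. The paper does not supply its own proof of this proposition---it is quoted as well-known with a reference to \cite{Hak}, lemma 4---so there is nothing further to compare; your write-up is the standard and expected proof.
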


\begin{corollary} \label{cor:line+}  
Suppose $(\Nset,\Xset)$ is a problem, where $\Nset=\{n_1,\ldots,n_s; n\}$ is $(\le)$-scheme. Suppose also $\alpha$ is a line with $\alpha^\#_{\Nset,\Xset}=n+1.$ Then 
the problem $(\Nset,\Xset)$ is $n$-solvable if and only if the problem $(\Nset^0,\Xset)$ is $(n-1)$-solvable, where $\Nset^0:=\Nset-\Delta^\alpha_\Xset=\{n_1-\delta(1),\ldots,n_s-\delta(s); n-1\}.$
\end{corollary}

\subsection{Some results in the multivariate case}
Set $$d(n, k) := \overline{n+1}-\overline{n-k+1}=(n-k+2)+(n-k+3)+\cdots+(n+1).$$
The following proposition will be used for lines and conics. 
\begin{proposition}\label{fn1n2} Let $\Nset:=\{n_1,\ldots,n_s; n\}$ be a scheme. If the problem $(\Nset,\Xset)^k,\ k\ge 2,$ is $n$-solvable for a set of points $\Xset,$ then for any plane curve $f$ of degree $m\le n$ we have that
$f_{\Nset,\Xset}^\# \le d(n,m).$
\end{proposition}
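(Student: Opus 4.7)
The plan is to combine $n$-solvability of $(\Nset,\Xset)$ (which by Proposition \ref{indep} amounts to surjectivity of the full Hermite evaluation map) with a local analysis of the subspace $W := f\,\Pi_{n-k}\subset\Pi_n$ of polynomials divisible by $f$. Since $q\mapsto fq$ injects $\Pi_{n-k}$ into $W$, we have $\dim W=N_{n-k}$ and hence $\dim(\Pi_n/W)=d(n,k)$.

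Let $\Phi_f$ denote the projection of the full Hermite evaluation map onto the coordinates associated with the nodes lying on $f$; its codomain has dimension $\sum_{A_i\in f}\overline{n_i}$. By $n$-solvability, the full Hermite map is surjective, hence so is $\Phi_f$. Picking any linear complement $W'$ of $W$ in $\Pi_n$ (with $\dim W'=d(n,k)$) gives $\Phi_f(\Pi_n)=\Phi_f(W)+\Phi_f(W')$, so
$$\sum_{A_i\in f}\overline{n_i}\;\le\;\dim\Phi_f(W)+d(n,k).$$

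The heart of the argument is the complementary upper bound $\dim\Phi_f(W)\le\sum_{A_i\in f}(\overline{n_i}-n_i)$, which I would establish one node at a time. At each $A_i\in f$ — a smooth point of $f$ in every case of interest, since the proposition will be applied to lines and irreducible conics — choose local coordinates $(u,v)$ in which $f$ is defined by $v=0$. Any $p=fq\in W$ is then locally of the form $p=v\tilde q$, so $\partial_u^a p(A_i)=0$ for $a=0,1,\ldots,n_i-1$. Since Hermite vanishing of order $n_i$ at $A_i$ is the coordinate-invariant condition $p\in\mathfrak{m}_{A_i}^{n_i}$, the image of $W$ in the Hermite data at $A_i$ lies in a subspace of codimension at least $n_i$ inside an $\overline{n_i}$-dimensional target, yielding the claimed bound after summation.

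Substituting into the previous display gives $\sum_{A_i\in f}n_i\le d(n,k)$, which is exactly $\#f_{\Nset,\Xset}\le d(n,k)$. The main obstacle is the local codimension count in the preceding paragraph, which uses smoothness of $f$ at each of its nodes; this is automatic for lines and irreducible conics, the only curves to which the proposition will be applied in the sequel.
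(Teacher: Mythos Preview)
Your argument is correct (modulo the smoothness caveat you flag) but follows a different route from the paper. The paper argues by contradiction: assuming $\#f^0_{\Nset,\Xset}>d(n,k)$, it first extends $(\Nset,\Xset)$ via Proposition~\ref{widehat} to an $n$-correct problem $(\widehat\Nset,\widehat\Xset)$ by adjoining simple nodes, which by Remark~\ref{rem:XX'} may be taken off $f^0$. The reduced scheme $\widehat\Nset^0=\widehat\Nset-\Delta^{f^0}_{\widehat\Xset}$ of degree $n-k$ is then a $(<)$-scheme, so Proposition~\ref{prop:(<)} supplies a nonzero $p^0\in\Pi_{n-k}$ annihilating the reduced Hermite data; hence $p=f^0p^0\in\Pi_n$ annihilates the full Hermite data, contradicting correctness. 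Your approach is more direct and linear-algebraic, bypassing the extension-to-correctness step and the perturbation argument; the paper's is more elementary in that it only uses counting of conditions and the existence of nontrivial solutions to underdetermined systems.

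One remark: the smoothness hypothesis you introduce is not actually needed, and dropping it recovers the proposition in the full generality stated. The only fact required is that $f\in\mathfrak{m}_{A_i}$ forces $f\cdot\mathfrak{m}_{A_i}^{\,n_i-1}\subset\mathfrak{m}_{A_i}^{\,n_i}$, so the map
\[
\Pi_{n-k}\longrightarrow R/\mathfrak{m}_{A_i}^{\,n_i},\qquad q\longmapsto [fq],
\]
factors through $R/\mathfrak{m}_{A_i}^{\,n_i-1}$, a space of dimension $\overline{n_i-1}=\overline{n_i}-n_i$. This yields your codimension bound at each node with no local coordinate change and no smoothness assumption on $f$. (It is the same mechanism the paper exploits implicitly: $f^0p^0$ vanishes to order $\ge n_i$ at $A_i\in f^0$ as soon as $p^0$ vanishes there to order $\ge n_i-1$.)
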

\begin{proof} First consider the case $k=2.$ Let the problem $(\Nset,\Xset)$ be $n$-solvable for a set of points $\Xset.$ 
Assume, by way of contradiction, that for a curve $f^0$ of degree $m\le n,$ we have that
 $f_{\Nset,\Xset}^{0\#} >d(n,m).$

Then,
Proposition \ref{widehat} implies that
\begin{equation}\label{WN0X0}(\widehat\Nset,\widehat\Xset)\ \hbox{is $n$-correct,}\end{equation} 
for some $\widehat\Xset=\Xset\cup\Sset,$ 
where $\Sset$ is a set of $r$ simple points with $r=\overline{n+1}- \Nset_\#^2$ and $$\widehat\Nset:=\{n_1,\ldots,n_s,\underbrace{1,\ldots,1}_\text{$r$}; n\}.$$ 
Moreover, by making use of
Remark \ref{rem:XX'} we may assume that
the set $\Sset$ contains no point from the curve $f^0.$

Now, consider the following scheme 
$$\widehat\Nset^0:=\widehat\Nset - \Delta^{f^0}_{\widehat\Xset}=\{n_1^0,\ldots,n_s^0,\underbrace{1,\ldots,1}_\text{$r$}; n-m\}$$ $$:=
\{n_1-\delta(1),\ldots,n_s-\delta(s),\underbrace{1,\ldots,1}_\text{$r$}; n-m\}.$$
The above condition on the curve $f^0,$ in view of \eqref{N-N+1}, means that
\begin{equation}\label{f02n+1}  \sum_{i=1}^s\delta(i) n_i>d(n,m).\end{equation} 
Then let us check that $\widehat\Nset^0$ is $(<)$-scheme. Indeed, by using \eqref{N-N+1} and \eqref{f02n+1}, we obtain  
\begin{equation*}\widehat\Nset^0_\# =\sum_{i=1}^s\overline {n_i-\delta(i)}+r< \sum_{i=1}^s\overline n_i+r -d(n,m) = \overline{n+1}-d(n,m)=\overline {n-m+1}.\end{equation*}
Therefore, in view of Proposition \ref{prop:(<)}, there exists a polynomial $p^0\in\Pi_{n-m}$ such that
$$D^{\widehat \Nset^0}p^0|_{\widehat\Xset}=0.$$

\noindent Now we readily get that the polynomial $p:=f^0p^0\in\Pi_n$ satisfies the condition: $$D^{\widehat \Nset}p|_{\widehat\Xset}=0.$$
This, by Proposition \ref{correct H}(ii), contradicts \eqref{WN0X0}. 

Now consider the case of general dimension: $(\Nset,\Xset)^k.$ Assume that $\Xset$ does not satisfy the condition, i.e., there is a plane curve $f^0$ passing
through more than $d(n,m)$ of its points. Consider the respective subproblem restricted to
the interpolation points belonging to the curve $f^0:\ (\Nset_0,\Xset_0)^k.$ As we proved above, this subproblem is not $n$-solvable in the case of $k=2.$ Therefore the original problem also is not $n$-solvable.
\end{proof}
By applying Proposition \ref{fn1n2} for the cases $k=1,2,$ we obtain the following two necessary conditions for $n$-solvability:
\begin{corollary}\label{nes2} Let $\Nset:=\{n_1,\ldots,n_s; n\}$ be a scheme. If the problem $(\Nset,\Xset)^k,\ k\ge 2,$ is $n$-solvable for a set of points $\Xset,$ then: 
\begin{enumerate}
\item
For any line $\alpha$ we have that
$\alpha_{\Nset,\Xset}^\# \le n + 1.$
\item
For any conic $\beta$ we have that
$\beta_{\Nset,\Xset}^\# \le 2n + 1.$
\end{enumerate}
\end{corollary}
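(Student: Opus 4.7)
The plan is to derive this corollary as an immediate specialization of Proposition \ref{fn1n2}. Since a line is a plane algebraic curve of degree $k=1$ and a (possibly degenerate) conic is a curve of degree $k=2$, both satisfy $k\le n$ once $n\ge 2$ (and the cases $n=0,1$ are either vacuous or trivial), so Proposition \ref{fn1n2} applies and bounds $\#\ell_{\Nset,\Xset}$ and $\#\beta_{\Nset,\Xset}$ by $d(n,1)$ and $d(n,2)$ respectively.

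The only substantive step is to evaluate the function $d(n,k)=(1/2)k(2n+3-k)$ at $k=1$ and $k=2$. Plugging in gives $d(n,1)=(1/2)\cdot 1\cdot(2n+2)=n+1$ and $d(n,2)=(1/2)\cdot 2\cdot(2n+1)=2n+1$, which are precisely the desired bounds in (i) and (ii). Substituting these values into the conclusion of Proposition \ref{fn1n2} yields the two inequalities.

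There is no real obstacle here: the work has been done in Proposition \ref{fn1n2}, and the corollary is a matter of reading off the numerical values of $d(n,k)$ for the two smallest $k$. The only minor point worth flagging is that the statement covers all conics, including reducible ones (pairs of lines, double lines); since Proposition \ref{fn1n2} is stated uniformly for any curve $f$ of degree $k\le n$, this case is included with no extra argument, despite the remark preceding Proposition \ref{fn1n2} emphasizing lines and irreducible conics.
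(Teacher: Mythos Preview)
Your proposal is correct and matches the paper's own argument, which consists of the single sentence ``By applying Proposition~\ref{fn1n2} for the cases $k=1,2,$ we obtain the following two necessary conditions for $n$-solvability.'' Your explicit evaluation of $d(n,1)=n+1$ and $d(n,2)=2n+1$ and your remarks on the small-$n$ edge cases and on reducible conics are accurate elaborations of this.
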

In the sequel the condition (i) will be referred to as the $l/(n + 1)$-\emph{condition}.

\noindent Similarly,  the condition (ii) will be referred to as the $c/(2n + 1)$-\emph{condition}.
\begin{definition}\label{Nindep}
A scheme $\Nset:=\Nset^2$ is called \emph{regular} if there exists a set of points $\Xset$ such that the problem  $(\Nset,\Xset)^2$ is 
$n$-independent.
\end{definition}

By taking into account the above condition (i) for the lines passing through any two points, and the condition (ii) for the conics passing through any five points, one readily obtains 
\begin{corollary}\label{cor:n1n2} Let $\Nset:=\{n_1,\ldots,n_s; n\}^2$ be a scheme with non-increasing sequence of
multiplicities. If $\Nset$ is regular then we have that
$$n_1 + n_2 \le n + 1\ \ \hbox{and}\ \ n_1+\cdots+n_5\le 2n+1.$$
\end{corollary}
Now we are in a position to present the result, which we are going to extend. Actually, it is slightly reformulated (see \cite{Hak}, 2nd par. after the proof of Thm.9).
\begin{theorem}[\cite{Hak}, Thm.9] \label{th:2n+1} Let $\Nset:=\{n_1,\ldots,n_s; n\}^k,\ k\ge 2,$ be a scheme, satisfying the condition
\begin{equation*}\label{2cond} \sum_{i=1}^sn_i\le 2n+1.\end{equation*}
Then the problem $(\Nset,\Xset)^k$ is
$n$-solvable if and only if there is no line passing through more than $n+1$ points of $\Xset.$
\end{theorem}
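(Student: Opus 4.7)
The only-if direction is immediate from the $\ell/(n+1)$-condition of Corollary \ref{nes2}(i). For the if-direction, my plan is to proceed by induction on $n$. The base case $n=0$ is trivial, and when $s\le 1$ the line condition forces $n_1\le n+1$, so Severi's theorem (Theorem \ref{th:n+1}) applies directly. Assuming $n\ge 1$ and $s\ge 2$, I would distinguish two cases according to whether some line $\ell$ attains the full mass $\#\ell_{\Nset,\Xset}=n+1$.

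Suppose first that such an $\ell$ exists. Given data $\tilde c$, I would apply Severi's theorem to the sub-problem supported on the $\ell$-nodes alone (whose multiplicities sum to exactly $n+1$) to obtain $r\in\Pi_n$ interpolating $\tilde c$ at those nodes. The polynomial $p-r$ then has zero Hermite data at all $\ell$-nodes, so Proposition \ref{prop:line} applied to this sub-problem yields $q\in\Pi_{n-1}$ with $p=r+\ell q$, and a standard Leibniz computation translates the remaining conditions into a Hermite problem for $q$ in degree $n-1$ with scheme $\Nset-\Delta^\ell_\Xset$. To invoke the inductive hypothesis on this residual problem, two combinatorial observations are needed. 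First, when $s\ge 2$ one has $|\{A_i\in\ell\}|\ge 2$, for otherwise the unique node on $\ell$ would have multiplicity $n+1$ and the line joining it to any other node of $\Xset$ would carry mass at least $n+2$, violating the $\ell/(n+1)$-condition; this yields the residual sum bound $\sum n_i - |\{A_i\in\ell\}|\le 2n-1$. Second, any two distinct lines of full mass $n+1$ must share a node, because by inclusion-exclusion $\#\ell+\#\ell'-\sum_{A_i\in\ell\cap\ell'}n_i\le \sum n_i\le 2n+1$ forces $\sum_{A_i\in\ell\cap\ell'}n_i\ge 1$. Together these imply the residual $\ell/n$-condition (the count of any $\ell'\ne\ell$ with original mass $n+1$ drops to $n$, and for $\ell'=\ell$ the residual count is $n+1-|\{A_i\in\ell\}|\le n$), so by induction $q$ exists and $p=r+\ell q$ solves the original problem.

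Suppose now that no line has mass $n+1$. I would enhance the problem by fixing two nodes $A_1,A_2$ and adding $k=n+1-\#(\ell_{12})_{\Nset,\Xset}\ge 1$ simple nodes on $\ell_{12}$, chosen generically off the existing nodes. The enhanced problem still satisfies the $\ell/(n+1)$-condition, because any $\ell'\ne\ell_{12}$ meets $\ell_{12}$ in a single point and therefore contains at most one new node. The first-case reduction then applies to the enhanced problem with $\ell=\ell_{12}$; crucially, the inclusion-exclusion observation survives even though the bound $\sum n_i\le 2n+1$ may fail on the enhanced scheme, since any $\ell'\ne\ell_{12}$ with enhanced mass $n+1$ must have picked up at least one new node (its old mass was $\le n$), and that node automatically lies on $\ell_{12}\cap\ell'$. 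Since any interpolant for the enhanced problem restricts to one for the original, the induction closes. The principal technical obstacle is the bookkeeping in the first case, namely the two combinatorial observations above, both of which rely on careful inclusion-exclusion using the total-multiplicity bound together with the line condition.
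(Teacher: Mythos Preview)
Your argument is essentially correct. Note, however, that the paper does not prove Theorem~\ref{th:2n+1}: it is quoted from \cite{Hak} and used as a tool in the proof of Theorem~\ref{th:2n+2}. So there is no ``paper's own proof'' to compare against directly.

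That said, your approach differs in style from the methodology the paper employs for the analogous Theorem~\ref{th:2n+2}. There the authors never construct an interpolant directly; instead they invoke Proposition~\ref{widehat} to complete the solvable problem to an \emph{exact} one, and then verify $n$-correctness via Proposition~\ref{correct H}(iii), \ie, by showing that $D^{\widehat\Nset}p|_{\widehat\Xset}=0$ forces $p=0$. The line (or conic) factorization of Proposition~\ref{prop:line} (resp.~\ref{prop:conic}) is used only in this ``uniqueness'' direction, and the Leibniz bookkeeping you carry out is avoided entirely. Your route is more constructive: you build $p=r+\ell q$ explicitly, with $r$ obtained from Severi on the line nodes and $q$ from the inductive hypothesis on the reduced scheme $\Nset-\Delta^\ell_\Xset$. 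Both approaches rest on the same two combinatorial facts (the residual sum drops by at least $2$, and two full-mass lines must share a node), but the paper's framework trades the Leibniz computation for a cleaner dimension count at the cost of introducing the auxiliary simple points $\Sset_0,\Sset_1$ and Remark~\ref{rem:XX'}.

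One small point of presentation: your invocation of Proposition~\ref{prop:line} in Case~1 is phrased as though $p$ is already in hand (``the polynomial $p-r$ then has zero Hermite data\ldots''), whereas you are in fact \emph{constructing} $p$. The cleaner statement is simply that you search for $p$ in the form $r+\ell q$; Proposition~\ref{prop:line} then guarantees this ansatz loses no solutions, but you do not actually need that direction for solvability. This is cosmetic and does not affect correctness.
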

This theorem can be regarded as a natural extension of the following classical Theorem of Severi (1921).
\begin{theorem}[Severi \cite{S}, Thm.9] \label{th:n+1} Let $\Nset:=\{n_1,\ldots,n_s; n\}^k,\ k\ge 1,$ be a scheme. Then the problem $(\Nset,\Xset)^k$ is $n$-solvable for any point set $\Xset$ if and only if  
\begin{equation*}\label{1cond}  \sum_{i=1}^sn_i\le n+1.\end{equation*}
\end{theorem}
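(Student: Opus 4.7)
\textbf{Proof proposal for Theorem \ref{th:n+1}.}

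The plan is to treat the two implications separately, leaning only on the material already established.

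\emph{Necessity.} Suppose the problem $(\Nset,\Xset)$ is $n$-solvable for every choice of $\Xset$. After relabelling so that the sequence of multiplicities is non-increasing, instantiate $\Xset$ as $s$ distinct points lying on a single line $\ell$. Corollary \ref{nes2}(i) then delivers
$$\sum_{i=1}^s n_i=\#\ell_{\Nset,\Xset}\le n+1.$$

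\emph{Sufficiency.} Assume $\sum_{i=1}^s n_i\le n+1$ and let $\Xset=\{A_1,\ldots,A_s\}$ be arbitrary. By Proposition \ref{indep}, it suffices to produce an $n$-fundamental polynomial for each Hermite parameter $D^{i'j'}p(A_{k'})$ with $i'+j'\le n_{k'}-1$. For every $k\neq k'$ choose a line $\ell_k$ passing through $A_k$ but avoiding $A_{k'}$, which is possible because the nodes are distinct, and set
$$F:=\prod_{k\neq k'}\ell_k^{n_k}.$$
Then $F$ vanishes to order at least $n_k$ at each $A_k$ with $k\neq k'$, $F(A_{k'})\neq 0$, and $\deg F\le n+1-n_{k'}$. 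The aim is to find $q\in\Pi_{n_{k'}-1}$ such that $p:=Fq$ realizes the prescribed Hermite jet at $A_{k'}$; the bound $\deg p\le n$ is then automatic, and $p$ will be the sought fundamental polynomial.

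To produce $q$, translate $A_{k'}$ to the origin and consider the linear endomorphism $\Phi$ of $\Pi_{n_{k'}-1}$ obtained by truncating $Fq$ at total degree $n_{k'}-1$. Specifying the Hermite jet of $Fq$ at $A_{k'}$ amounts to prescribing $\Phi(q)$, and both the source and the target have dimension $\overline{n_{k'}}$. The kernel of $\Phi$ is trivial: since $F(0,0)\neq 0$, multiplication by $F$ preserves the order of vanishing at the origin, so $\Phi(q)=0$ forces $q$ itself to vanish there to order $\ge n_{k'}$, whence $q=0$ because $\deg q<n_{k'}$. Hence $\Phi$ is a linear isomorphism and the required $q$ exists. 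The main technical obstacle of the argument is this invertibility claim, which however reduces to the elementary local fact that a unit in the power-series ring acts bijectively on every truncated jet space.
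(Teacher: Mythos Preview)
Your argument is correct. Note, however, that the paper does not supply its own proof of Theorem~\ref{th:n+1}: it is quoted as Severi's classical result, and the paper's point is that Theorem~\ref{th:2n+1} generalizes it. If one extracts a proof from the paper's framework, the necessity direction is exactly what you wrote (Corollary~\ref{nes2}(i) applied to a collinear configuration), while sufficiency is immediate from Theorem~\ref{th:2n+1}: the hypothesis $\sum_i n_i\le n+1$ automatically forces $\#\ell_{\Nset,\Xset}\le\sum_i n_i\le n+1$ for every line $\ell$, so the line condition there is vacuous.

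Your sufficiency argument takes a different, more elementary route. Rather than appealing to the stronger Theorem~\ref{th:2n+1} (whose proof is in \cite{Hak}, not in this paper), you build each fundamental polynomial directly as $Fq$ with $F=\prod_{k\neq k'}\ell_k^{n_k}$ and then solve for the local factor $q\in\Pi_{n_{k'}-1}$ via the invertibility of multiplication by a unit on the $(n_{k'}-1)$-jet space. This is self-contained and avoids any induction or reduction machinery; the price is the small local-algebra lemma about $\Phi$, which you handle correctly. The paper's implicit route, by contrast, trades that local computation for the global structure theorem on $\sum n_i\le 2n+1$, which is heavier but is needed in the paper anyway.
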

Until the end of the subsection, we study the bivariate case again.

From Theorem \ref{th:2n+1} we get
\begin{corollary} \label{cor:2n+1} Suppose that a scheme $\Nset=\{n_1,\ldots,n_s; n\}^2,$ with non-increasing sequence of multiplicities, satisfies the condition: 
$\sum_{i=1}^sn_i\le 2n+1.$ 
Then $\Nset$ is regular if only if  $n_1 + n_2 \le n + 1.$  
\end{corollary}
\begin{proof} Indeed, the direction ``only if" follows from Corollary \ref{cor:n1n2}. For the direction ``if" consider a problem $(\Nset,\Xset),$ where in $\Xset$ no three points are collinear. Then according to Theorem  \ref{th:2n+1} the problem $(\Nset,\Xset)$ is $n$-solvable. \end{proof}

For $\Nset=\{n_1,\ldots,n_s; n\}$ denote $\Nset-2:=\{n_1-1,\ldots,n_s-1; n-2\}.$

Next proposition is an extension of Proposition \ref{prop:line}:
\begin{proposition} \label{prop:conic} Let $\Nset:=\{n_1,\ldots,n_s; n\}$ be a scheme with non-increasing sequence of
multiplicities such that 
$$n_1+n_2\le n+1, \quad n_1+\cdots+n_s= 2n+1,\ s\ge 5.$$
Suppose $\Xset$ is a set of $s$ points with $\Xset\subset \beta^0,$ where $\beta^0$ is an irreducible conic. Then we have that
\begin{equation}\label{1357}p\in\Pi_n,\ D^\Nset p|_\Xset=0\implies p=\beta^0 q,\end{equation} 
where $q\in\Pi_{n-2}$ and $D^{\Nset-2} q|_\Xset=0.$
\end{proposition}
\begin{proof}
Consider the following scheme: 
$$\Nset^0:=\Nset-2:=\{n_1^0,\ldots,n_s^0; n-2\}:=
\{n_1-1,\ldots, n_s-1; n-2\}.$$
We have that 
$$\sum_{i=1}^sn^0_i=\sum_{i=1}^sn_i-s\le 2n+1-5=2(n-2).$$

Note that 
$$n_1^0+n_2^0:=n_1+n_2-2\le (n-2)+1.$$
The above inequality, in view of irreducibility of $\beta_0,$  implies that the $l/(n-1)$-condition is satisfied for the problem $(\Nset^0,\Xset)$.

Thus, in view of Theorem \eqref{th:2n+1}, we obtain that $(\Nset^0,\Xset)$ is $(n-2)$-solvable. Similarly we obtain that the problem $(\Nset,\Xset)$ is $n$-solvable.

Now, consider the following linear spaces:
$$\Pi_{\Nset\Xset}=\{p\in\Pi_n\ |\ D^\Nset p|_\Xset=0\},$$
$$\beta^0\cdot\Pi_{\Nset^0\Xset}=\{\beta^0q\ |\ q\in\Pi_{n-2},\ D^{\Nset^0} q|_\Xset=0\}.$$
Note that $\beta^0\cdot\Pi_{\Nset^0\Xset}\subset \Pi_{\Nset\Xset}.$
Since the problems $(\Nset, \Xset)$ and $(\Nset^0, \Xset)$ are solvable, we get from Corollary \ref{dimNX}, that
$$\dim \Pi_{\Nset\Xset}=\overline {n+1}- \sum_{i=1}^s\overline n_i=
\overline {n-1}+2n+1- (\sum_{i=1}^s\overline {n_i-1}+2n+1)$$
$$=\overline {n-1}- \sum_{i=1}^s\overline {n_i-1}=\dim\Pi_{\Nset^0\Xset}=\dim (\beta^0\cdot\Pi_{\Nset^0\Xset}).$$
Thus we obtain that $\Pi_{\Nset\Xset}=\beta^0\cdot\Pi_{\Nset^0\Xset}.$
\end{proof}
Below we consider the case $s=4$ of above proposition. Note that the cases $s\le 3$ are not possible.
\begin{remark} \label{rem:conic} Let $\Nset:=\{n_1,\ldots,n_4; n\}$ be a scheme with non-increasing sequence of
multiplicities such that 
$$n_1+n_2\le n+1, \quad n_1+\cdots+n_4= 2n+1.$$
Then $n=2m-1,\ m\ge 1,$ implies that  $\Nset_{2m-1}:=\{m,m,m,m-1; 2m-1\}.$
\noindent While $n=2m$ implies that  $\Nset_{2m}:=\{m+1,m,m,m; 2m\}.$

Now, let  $\Nset=\Nset_s,\ \forall s .$ Note that $\Nset$ is exact. The proof of Proposition \ref{prop:conic} implies that the problem $(\Nset,\Xset)$ is solvable, provided that $\Xset\subset \beta^0,$ with $\beta^0$ any irreducible conic. The latter condition in this case means that no three points of $\Xset$ are collinear. Thus the problem $(\Nset,\Xset)$ is correct:
$$p\in\Pi_n,\ D^{\Nset_s} p|_\Xset=0\implies p=0.$$
Note that also $\Nset^0$ (see the above proof) is correct, since if $\Nset=\Nset_s$ then $\Nset^0=\Nset_{s-2}.$
Therefore Proposition \ref{prop:conic} is valid also in the case $s=4,$ and the statement \eqref{1357}, i.e., $p=\beta^0q,$ reduces to $0=\beta^0\cdot 0.$ 
\end{remark}

\begin{remark} \label{rem:2n+2}
Note that, according to Corollary \ref{cor:2n+1}, for any scheme $\Nset=\{n_1,\ldots,n_s; n\}^2,$ with non-increasing sequence of multiplicities, we have that
the conditions $n_1 + n_2 \le n + 1,\ \ \sum_{i=1}^sn_i\le 2n+1,$ imply that $\Nset^2$ is $(\le)$-scheme. 

In the main result of this paper below we consider instead the condition 
$\sum_{i=1}^sn_i\le 2n+2$ which, in combination with $n_1 + n_2 \le n + 1,$ does not imply that $\Nset^2$ is $(\le)$-scheme. Indeed, for a counterexample, we just add a fifth multiplicity $1$ to the scheme $\Nset_s$  of the above remark. 

This is why, in the following theorem we have an additional condition, compared to Theorem \ref{th:2n+1}: $\Nset^2$ is $(\le)$-scheme.\end{remark}

\section{The main result}

In this section we will prove the following 
\begin{theorem}\label{th:2n+2} Let $\Nset:=\{n_1,\ldots,n_s; n\}^2$ be a 
$(\le)$-scheme satisfying the condition
\begin{equation*}\label{2cond'}\sum_{i=1}^sn_i\le 2n+2.\end{equation*}
Then the problem $(\Nset,\Xset)^k,\ k\ge 2,$ is
$n$-solvable if and only if: 
\begin{enumerate}
\item
There is no line passing through more than $n+1$ points
of $(\Nset,\Xset).$ 
\item
There is no conic passing through more than $2n+1$ points of $(\Nset,\Xset).$ 
\end{enumerate}
\end{theorem}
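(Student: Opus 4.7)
The necessity is immediate from Corollary \ref{nes2} applied with $k=1$ and $k=2$, so the plan is to focus on sufficiency and proceed by induction on the degree $n$. After disposing of small base cases, one splits according to the size of $\sum n_i$. If $\sum_{i=1}^s n_i \le 2n+1$, then Theorem \ref{th:2n+1} applies directly using condition (i), and there is nothing new to do. The novelty lies in the case $\sum n_i = 2n+2$.

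The guiding idea in this case is to locate a ``saturating'' line or conic and factor it out. Suppose first that some line $\ell$ attains $\#\ell_{\Nset,\Xset}=n+1$. Then the pullback argument of Proposition \ref{prop:line} shows that any $p \in \Pi_n$ with $D^\Nset p|_\Xset=0$ is divisible by $\ell$, giving $p=\ell q$ with $q \in \Pi_{n-1}$ satisfying the reduced problem $(\Nset',\Xset)$, where $\Nset':=\Nset-\Delta^\ell_\Xset$. Letting $t$ denote the number of nodes of $\Xset$ on $\ell$, one has $\sum n_i' = 2n+2 - t$; a short check using Corollary \ref{cor:n1n2} rules out $t=1$. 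If $t \ge 3$ then $\sum n_i' \le 2n-1$, and I would invoke Theorem \ref{th:2n+1} at degree $n-1$; if $t=2$ then $\sum n_i' = 2(n-1)+2$ and the inductive hypothesis of Theorem \ref{th:2n+2} applies. Either way, one must verify the hypotheses of the invoked theorem for $(\Nset',\Xset)$. The key observation that makes this go through is that two distinct lines $\ell,\ell'$ both attaining the $n+1$ bound would cover all $2n+2$ node-multiplicities and present the reducible conic $\ell\cup\ell'$ with $\#(\ell\cup\ell')_{\Nset,\Xset}=2n+2$, contradicting (ii); this prevents the residual line condition in the reduced scheme from failing. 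A parallel argument, now using a sharp bound on $n_1$ forced by the $(\le)$-scheme hypothesis, takes care of the conic-condition bookkeeping at degree $n-1$.

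If no line is saturating, I would similarly look for an irreducible conic $\beta^0$ attaining $\#\beta^0_{\Nset,\Xset}=2n+1$. Pulling back to $\beta^0\cong\mathbb{P}^1$ shows that the restriction of any kernel polynomial $p$ has degree $\le 2n$ yet vanishes with total multiplicity $2n+1$, so $p = \beta^0 q$ with $q \in \Pi_{n-2}$, exactly as in the proof of Proposition \ref{prop:conic}. The reduced scheme $\Nset'':=\Nset-\Delta^{\beta^0}_\Xset$ has degree $n-2$ and $\sum n_i'' = 2n+2 - t_\beta$, and Theorem \ref{th:2n+1} or the inductive hypothesis of Theorem \ref{th:2n+2} applies depending on $t_\beta$, again after checking the (now weaker) line and conic conditions at degree $n-2$.

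The principal obstacle is the residual case in which $\#\ell \le n$ and $\#\beta^0 \le 2n$ strictly for every line and every irreducible conic, so no direct factorisation is available. Here I would invoke Proposition \ref{widehat} and try to exhibit $r = N-\Nset_\#$ simple nodes in sufficiently generic position making $(\widehat\Nset,\widehat\Xset)$ $n$-correct. Equivalently, one argues by contradiction: a nontrivial $p$ in $V=\{p\in\Pi_n : D^\Nset p|_\Xset=0\}$ causing $\dim V > N-\Nset_\#$ would, via the structure of its irreducible components, force a line or irreducible conic to carry more multiplicity than the strict bounds allow. Delivering this last contradiction cleanly, together with the $(\le)$-scheme bookkeeping inherited from each reduction in the previous steps, is where the bulk of the work lies.
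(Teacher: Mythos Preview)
Your plan is sound up to and including the saturating-line case, and that part matches the paper's Step~2 (including the observation that two saturating lines would produce a reducible conic violating~(ii)). But your ``residual case'' is where the argument breaks down, and it is not a fringe case: for generic node configurations no line attains $n+1$ and no irreducible conic attains $2n+1$, so this is in fact the main case. Your proposed contradiction argument---that an excess kernel polynomial would, via its irreducible components, force some line or conic to carry too much multiplicity---is not a proof; a single $p\in\Pi_n$ with $D^{\Nset}p|_{\Xset}=0$ need not have any low-degree factor at all, and the inequality $\dim V > N-\Nset_\#$ does not by itself produce one.

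The idea you are missing is that you do not need to \emph{find} a saturating curve: you can \emph{manufacture} one by adding auxiliary simple nodes on it. Concretely, the paper proceeds as follows after Step~2 (so one may now assume $n_1+n_2\le n$). If some line $\ell^0$ passes through at least three nodes of $\Xset$, set $\Nset^0:=\Nset-\Delta^{\ell^0}_{\Xset}$; then $\sum n_i^0\le 2n-1$ and Theorem~\ref{th:2n+1} gives $(n-1)$-solvability of $(\Nset^0,\Xset)$, which one upgrades to $(n-1)$-correctness of $(\widehat\Nset^0,\widehat\Xset)$ by Proposition~\ref{widehat}. Now \emph{add} $r_1=n+1-\#\ell^0_{\Nset,\Xset}$ further simple nodes on $\ell^0\setminus\Xset$; the enlarged problem is exact, and Proposition~\ref{prop:line} plus the correctness of the reduced problem force its kernel to be zero. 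If instead no three nodes are collinear, then by condition~(ii) and Lemma~\ref{lem:11111} one has $s\ge 6$; take the (necessarily irreducible) conic $\beta^0$ through $A_1,\dots,A_5$, reduce to degree $n-2$ (where Theorem~\ref{th:2n+1} again applies since $\sum n_i^0\le 2n-3$), and pad $\beta^0$ with simple nodes up to $2n+1$ so that Proposition~\ref{prop:conic} applies. In both branches the chosen curve need not have been close to saturation to begin with; the auxiliary simple nodes do the work. This two-way split (line through three nodes / no three collinear) is exhaustive and replaces your residual case entirely.
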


\begin{proof} The direction ``only if" follows from Corollary \ref{nes2}. Let us prove the direction ``if". First we consider the (main) case $k = 2.$

Note that the above condition (ii) means that
there is no conic passing through all the points of $\Xset.$

In view of Theorem \ref{th:2n+1} we can assume, in what
follows, that 
\begin{equation}\label{hav} \sum_{i=1}^sn_i= 2n+2.\end{equation}

 From now on suppose also that $\Nset:=\{n_1,\ldots,n_s; n\}$ is a scheme with non-increasing sequence of multiplicities:
 \begin{equation}\label{dec'}n_1 \ge n_2\ge \cdots\ge n_s.\end{equation}
By using the condition (i) for the line passing through $x^{(1)},x^{(2)}\in \Xset,$  we get
\begin{equation}\label{fcond}n_1 + n_2 \le n + 1.\end{equation}
We can assume also that
\begin{equation}\label{n1} n_1\le n-1. 
\end{equation}
Indeed, assume that $n_1=n+1.$ Then the only $(\le)$-scheme is the  the Taylor scheme  $\{n+1;n\}.$ But the latter does not satisfy \eqref{hav}.
While if $n_1=n$ then the conditions \eqref{hav} and \eqref{fcond}   imply that 
 $\Nset=\{n,\underbrace{1,\ldots,1}_\text{$n+2$};n\},$ 
which is not  $(\le)$-scheme.

We will apply induction
on the degree of the scheme $\Nset.$ 

{\bf Step 1.}
It is easily seen that  if $n = 1,$ then there is no $(\le)$-scheme satisfying the condition \eqref{hav}.

In the case $n = 2,$ there is only one $(\le)$-scheme 
$\Nset=\{1, 1, 1, 1, 1, 1; 2\}$ satisfying the conditions \eqref{hav} and \eqref{n1}. According to the condition (ii) of Theorem, the six simple points of the set $\Xset$ do not belong to a conic. Since $\Nset$ is exact scheme, we obtain from Proposition \ref{correct H}(ii), that $\Xset$ is $2$-correct. 

Now, assuming the statement holds for all degrees $\le n-1,$ we will prove it for $n,$ where $n\ge 3.$

{\bf Step 2.} Assume that there is a line $\alpha^0$ which passes through $n+1$ points of $(\Nset,\Xset),$ i.e., the following condition is satisfied:
\begin{equation}\label{elln+1}\alpha^{0\#}_{\Nset,\Xset}=n+1.
\end{equation} 
Consider the scheme (see \eqref{delta})
$$\Nset^0:=\Nset - \Delta^{\alpha^0}_\Xset=\{n_1^0,\ldots,n_s^0; n-1\}:=
\{n_1-\delta(1),\ldots,n_s-\delta(s); n-1\}.$$
From \eqref{N-N+1}, in view of the condition \eqref{elln+1}, we have that 
\begin{equation}\label{alphan+1}  \sum_{k=1}^s\overline n_k=\sum_{k=1}^s\overline {n_k-\delta(k)}+n+1.\end{equation} 
Next, we want to apply the induction hypothesis for the problem $(\Nset^0,\Xset).$
For this let us check first that $\Nset^0$ is $(\le)$-scheme. Indeed, by using \eqref{alphan+1}, we get  
\begin{equation*}\sum_{k=1}^s\overline n_k^0= \sum_{k=1}^s\overline {n_k-\delta(k)}= \sum_{k=1}^s\overline n_k -(n+1) \le \overline{n+1}-(n+1)=\overline n.\end{equation*}
The condition \eqref{2cond'} for $\Nset^0,$ readily follows from the fact that $\alpha^0$ passes through at least two points of $\Xset:$
\begin{equation}\label{-2}\sum_{i=1}^s n_i^0= \sum_{i=1}^s\ n_i -\sum_{i=1}^s\delta(i) \le 2n+2-2=2n.\end{equation}
Then let us verify that the $l/n$-condition is satisfied with respect to the problem $(\Nset^0,\Xset),$ i.e., there is no 
line $\alpha$ containing $\ge(n + 1)$ points of $(\Nset^0,\Xset).$ Assume conversely that there is such a
line $\alpha.$ Then evidently $\alpha\neq\alpha^0$ and, in view of the relation
$$\alpha_{\Nset^0,\Xset}^\#\le \alpha_{\Nset,\Xset}^\#,$$
we obtain that $\alpha$ contains $n+1$ points from $(\Nset,\Xset).$ 
Since $\alpha$ contains $n+1$ points both from $(\Nset,\Xset)$  and $(\Nset^0,\Xset),$ we conclude that the line $\alpha$ does not intersect $\alpha_0$ at a point of $\Xset.$
Therefore, the (reducible) conic $\beta=\alpha^0\alpha$ passes through $2n+2$ points of $\Xset,$ which contradicts the condition (ii) of Theorem.

Finally let us verify the $c/(2n-1)$-condition for the problem $(\Nset^0,\Xset).$ Assume, by way of contradiction, that  there is a 
conic $\beta$ containing $\ge 2n$ points of $(\Nset^0,\Xset).$
In view of \eqref{-2} this means that $\beta$ passes through all the points $x^{(i)}\in\Xset$ with $n^0_i\ge 1.$

Note that if $\alpha^0$ passes through at least three points of $\Xset$ then
in \eqref{-2} we would obtain $\sum_{i=1}^s n_i^0\le 2n-1$ and hence the
$c/(2n-1)$-condition is satisfied. It remains to consider the case where 
$\alpha^0$ passes through exactly two points of $\Xset$, say, $x^{(q)}$ and $x^{(r)}.$  In this case, in view of \eqref{elln+1}, we have that $n_q+n_r=n+1.$ According to \eqref{n1} we get that  $n_q, n_r\ge 2.$ Hence we have that $n_i^0\ge 1 \ \forall 1\le i\le s.$ Thus $\beta$ passes through all the points of $\Xset,$ which is contradiction.

Therefore, the problem $(\Nset^0,\Xset)$ is $(n-1)$-solvable by virtue of the induction hypothesis. 

Now, Proposition \ref{widehat} implies that there is a set $\Sset_0,$ consisting of $r:=\overline n- \Nset^0_\#$ simple points (see \eqref{wideN} and \eqref{XcupS}), such that the problem
\begin{equation}\label{N0X0}(\widehat\Nset^0,\widehat\Xset)\ \hbox{is $(n-1)$-correct,}\end{equation} 
where  
$$\widehat \Nset^0=\{n_1^0,\ldots,n_s^0,\underbrace{1,\ldots,1}_\text{$r$};n-1\},\quad \widehat\Xset=\Xset\cup\Sset_0.$$

Here, by making use of
Remark \ref{rem:XX'}, we can also assume that
the set $\Sset_0$ contains no point belonging to the line $\alpha_0.$

Now consider the problem $(\widehat\Nset,\widehat\Xset),$
where
$\widehat \Nset=\{n_1,\ldots,n_s,\underbrace{1,\ldots,1}_\text{$r$};n\}.$

\noindent Let us check first that $\widehat\Nset$ is exact scheme. Indeed, in view of \eqref{alphan+1}, we have 
\begin{equation*}\label{abc}\widehat\Nset_\#=\Nset_\#+\#\Sset_0=\Nset_\#^0+(n+1)+\#\Sset_0=\overline n+(n+1)=\overline{n+1}.\end{equation*}
\noindent Next, let us prove that the problem $(\widehat\Nset,\widehat\Xset)$ is $n$-correct. For this, let us assume, in view of Proposition \ref{correct H}(ii), that
$D^{\widehat \Nset}p|_{\widehat\Xset}=0,$ where $p\in\Pi_n$
and prove that $p=0.$

\noindent By the choice of the set $\Sset_0$ and the condition \eqref{elln+1} we have that
$\alpha^{0\#}_{\widehat\Nset,\widehat\Xset}=n+1.$

\noindent Then, Proposition \ref{prop:line} implies that $p=\alpha_0 q,$ where
$$q\in\Pi_{n-1}\ \hbox{and}\  D^{\widehat \Nset^0}q|_{\widehat\Xset}=0.$$
Now, in view of \eqref{N0X0} and Proposition \ref{correct H}(ii), we get $q = 0,$ and hence $p = 0.$

Thus, the problem $(\widehat\Nset,\widehat\Xset)$ is $n$-correct, which implies evidently that $(\Nset,\Xset)$ is $n$-solvable.  \hfill \qed
 
From now on, in view of Step 2, we can assume that the $l/n$-condition is satisfied for the scheme $\Nset,$ which implies that
\begin{equation}\label{elln12}n_1+n_2\le n.\end{equation}

{\bf Step 3.} 
Assume that there is a line $\alpha^0$ passing through at least  $3$ points of $\Xset.$
Consider the following scheme (see \eqref{delta}) 
$$\Nset^0:=\Nset - \Delta^{\alpha^0}_\Xset=\{n_1^0,\ldots,n_s^0; n-1\}:=
\{n_1-\delta(1),\ldots,n_s-\delta(s); n-1\}.$$
We have that
$$\sum_{i=1}^s n_i^0= \sum_{i=1}^s\ n_i -\sum_{i=1}^s\delta(i) \le 2n+2-3= 2(n-1)+1,$$
On the other hand, in view of Step 2, the $l/n$-condition holds.
Thus according to Theorem \ref{th:2n+1} we obtain that the problem $(\Nset^0,\Xset)$ is $(n-1)$-solvable. 

Now, Proposition \ref{widehat} implies that there is a set $\Sset_0,$ consisting of $r_0:=\overline n- \Nset^0_\#$ simple points, such that the problem
\begin{equation}\label{N0X03}(\widehat\Nset^0,\widehat\Xset)\ \hbox{is $(n-1)$-correct,}\end{equation} 
where  
$$\widehat \Nset^0=\{n_1^0,\ldots,n_s^0,\underbrace{1,\ldots,1}_\text{$r_0$};n-1\},\quad \widehat\Xset=\Xset\cup\Sset_0.$$  
Again, by making use of
Remark \ref{rem:XX'}, we can assume also that
the set $\Sset_0$ contains no point belonging to the line $\alpha_0.$

Then denote by $\Sset_1$ a set of $r_1:=n + 1 - \alpha^{0\#}_{\Nset,\Xset}$
simple points in $\alpha^0\setminus \Xset.$ 

Consider
the problem $(\widetilde\Nset,\widetilde\Xset),$  where 
$\widetilde\Xset:=\Xset\cup \Sset_0\cup \Sset_1,$ and
$$\widetilde \Nset=\{n_1,\ldots,n_s,\underbrace{1,\ldots,1}_\text{$r_0+r_1$};n\}.$$
Let us check first that $\widetilde\Nset$ is exact scheme. Indeed,  we have that
\begin{equation*}\label{abc}r_0+r_1=\overline n - \Nset_\#^0+(n+1) -\alpha^{0\#}_{\Nset,\Xset}=\overline {n+1}-\Nset_\#.\end{equation*}

\noindent Next, let us prove that the problem $(\widetilde\Nset,\widetilde\Xset)$ is $n$-correct. For this let us assume, in view of Proposition \ref{correct H}(ii), that
$D^{\widetilde \Nset}p|_{\widetilde\Xset}=0,$ where $p\in\Pi_n,$
and prove that $p=0.$

In view of the choice of the sets $\Sset_0$ and $\Sset_1$ we have that
$\alpha^{0\#}_{\widetilde\Nset,\widetilde\Xset}=n+1.$

\noindent Therefore, Proposition \ref{prop:line} implies that $p=\alpha_0 q,$ where
$$q\in\Pi_{n-1}\ \hbox{and}\  D^{\widehat \Nset^0}q|_{\widehat\Xset}=0.$$
Now, in view of \eqref{N0X03} and Proposition \ref{correct H}(ii), we get $q = 0,$ and hence $p = 0.$

Thus, the problem $(\widetilde\Nset,\widetilde\Xset)$ is $n$-correct, which implies that $(\Nset,\Xset)$ is $n$-solvable. \hfill \qed

From now on, in view of Step 3, we may assume that there are no three collinear points in the set $\Xset.$

{\bf Step 4.}  First note that the condition (ii) of Theorem, in view of Corollary \ref{lem:11111}, implies that $s\ge 6.$
Then consider a conic $\beta_0$ passing through the points $x^{(1)},\ldots,x^{(5)}.$ The conic $\beta_0$ is irreducible, since no three points are collinear. 

Consider the following scheme (see \eqref{delta})
$$\Nset^0:=\Nset - \Delta^{\beta^0}_\Xset=\{n_1^0,\ldots,n_s^0; n-2\}:=
\{n_1-\delta(1),\ldots, n_s-\delta(s); n-2\}.$$
We have that 
$$\sum_{i=1}^sn^0_i=\sum_{i=1}^sn_i-\sum_{i=1}^s\delta(i)\le 2n+2-5=2(n-2)+1.$$

Note that $\Nset^0$ may not satisfy the condition \eqref{dec'}.
Thus, to apply Theorem \ref{th:2n+1}, let us verify that
$$n_i^0+n_j^0:=n_i-\delta(i)+n_j-\delta(j)\le n-1\ \forall 1\le i<j\le s.$$
Indeed, if $i\le 5$ then $\delta(i)=1$ and, in view of \eqref{elln12}, this is evident. 

Now, assume conversely, that  the inequality $n_i+n_j\ge n_i^0+n_j^0\ge n$ holds for some $6\le i<j\le s.$
Then, in view of \eqref{dec'}, we have that $n_5+n_6\ge n_i+n_j\ge n,$ and therefore $n_1+\cdots+n_6\ge 3(n_5+n_6)\ge 3n.$ 

On the other hand we have that $3n\ge 2n+3,$ if $n\ge 3.$
Hence we have that $n_1+\cdots+n_6\ge 2n+3,$
 which contradicts the condition \eqref{2cond'}.

Thus, according to Theorem \ref{th:2n+1}, the problem $(\Nset^0,\Xset)$ is $(n-2)$-solvable.
Therefore, Proposition \ref{widehat} implies that there is a set $\Sset_0,$ consisting of $r_0:=\overline{n-1}- \Nset^0_\#$ simple points, such that the problem
\begin{equation}\label{n-2corr}(\widehat\Nset^0,\widehat\Xset)\ \hbox{is $(n-2)$-correct,}\end{equation} 
where  
$$\widehat \Nset^0=\{n_1^0,\ldots,n_s^0,\underbrace{1,\ldots,1}_\text{$r_0$};n-2\},\quad \widehat\Xset=\Xset\cup\Sset_0.$$  
By making use of
Remark \ref{rem:XX'} we can assume that
the set $\Sset_0$ contains no point belonging to the conic $\beta^0.$

Next, denote by $\Sset_1$ a set of $r_1:=2n + 1 -\beta^{0\#}_{\Nset,\Xset}$
simple points in $\beta^0\setminus \Xset.$ 

\noindent Consider
the problem $(\widetilde\Nset,\widetilde\Xset),$ where 
$\widetilde\Xset:=\Xset\cup \Sset_0\cup \Sset_1,$ and
$$\widetilde \Nset=\{n_1,\ldots,n_s,\underbrace{1,\ldots,1}_\text{$r_0+r_1$};n\}.$$
Let us check first that $\widetilde\Nset$ is exact scheme. Indeed,  we have that
\begin{equation*}\label{abc}r_0+r_1=\overline n - \Nset_\#^0+(n+1) -\alpha^{0\#}_{\Nset,\Xset}=\overline {n+1}-\Nset_\#.\end{equation*}
Now, let us prove that the problem $(\widetilde\Nset,\widetilde\Xset)$ is $n$-correct. For this let us assume, in view of Proposition \ref{correct H}(ii), that
$D^{\widetilde \Nset}p|_{\widetilde\Xset}=0,$ where $p\in\Pi_n$
and prove that $p=0.$

By taking into account the choice of the sets $\Sset_0$ and $\Sset_1$ we have that
$\beta^{0\#}_{\widetilde\Nset,\widetilde\Xset}=2n+1.$
Therefore, Proposition \ref{prop:conic} implies that $p=\beta^0 q,$ where
$$q\in\Pi_{n-2}\ \hbox{and}\ D^{\widehat \Nset^0}q|_{\widehat\Xset}=0.$$
Then, in view of \eqref{n-2corr} and Proposition \ref{correct H}(ii), we get $q = 0,$ and hence $p = 0.$

Thus, the problem $(\widetilde\Nset,\widetilde\Xset)$ is $n$-correct, which implies that $(\Nset,\Xset)$ is $n$-solvable. \hfill \qed

{\bf Step 5.} 
It remains to consider  the case of general dimension. 

Assume that $\Xset$ is a set of points in $k$ dimension
satisfying the $l/(n+1)$ and $c/2n+1$-conditions with respect to the problem $(\Nset,\Xset)^k.$ Then
we can project it to a set of points in  $k-1$ dimension, with the same property. Continuing in this way we will obtain a set of points $\Xset_0$ in a plane, with the same property.  Now, the problem $(\Nset,\Xset_0)^2$ is $n$-solvable, since Theorem has already been proven in the bivariate case. From here, by using $k-2$ times Remark \ref{rem:XX'}, we obtain that the original problem $(\Nset,\Xset)^k$ is $n$-solvable too.
\end{proof}

Note that Theorem \ref{th:2n+2} is not true already for the case 
\begin{equation}\label{avel}\sum_{i=1}^sn_i= 2n+3. \end{equation}
Let us bring  for this

{\bf A counterexample.}

Suppose the scheme is the following
$\Nset=\{n-1,\underbrace{1,\ldots,1}_\text{$n+4$};n\}^2,\ n\ge 3.$
Thus \eqref{avel} takes place. Also we have that $\Nset$ is $(\le)$-scheme:
$$\Nset_\#=\overline{n-1}+n+4=\overline n+4\le \overline{n+1}.$$

Suppose the corresponding set of points $\Xset$ is chosen as follows:
The points $x^{(i)},\ i=1,\ldots, n+3,$ all belong to an irredusible conic $\beta.$ The points $x^{(1)},\ x^{(n+4)},$ and  $x^{(n+5)}$ belong to a line $\alpha,$ which does not contain any other point from $\Xset.$ 

It is easily seen that the conditions (i) and (ii) of Theorem \ref{th:2n+2} are satisfied:
$$\alpha_{\Nset,\Xset}^{\#} = n + 1,\quad \beta_{\Nset,\Xset}^{\#} = 2n + 1.$$

Let us verify that the Hermite problem $(\Nset,\Xset)$
is not $n$-solvable. 

Indeed, by applying Corollary \ref{cor:line+} with respect to the line $\alpha,$ we get that the problem $(\Nset,\Xset)$ is $n$-solvable if and only if the problem $(\Nset^0,\Xset^0)$ is $(n-1)$-solvable, where $\Xset^0\subset\beta^0$ and $\Nset^0=\{n-2,\underbrace{1,\ldots,1}_\text{$n+2$};n-1\}^2.$

Now, on the  conic $\beta$ we have $n-2+n+2=2n$ points of $(\Nset^0,\Xset^0).$  Hence, in view of Corollary \ref{nes2}(ii), the problem $(\Nset^0,\Xset^0)$ is not $(n-1)$-solvable.

From Theorem \ref{th:2n+2} we get
\begin{corollary} \label{cor:2n+2} Suppose that a $(\le)$-scheme $\Nset=\{n_1,\ldots,n_s; n\}^2,$ with non-increasing sequence of multiplicities, satisfies the condition: 
$\sum_{i=1}^sn_i= 2n+2.$ 
Then $\Nset$ is regular if and only if  $s\ge 6$ and $n_1 + n_2 \le n + 1.$  
\end{corollary}
\begin{proof} Indeed, the direction ``only if" follows from Corollary \ref{cor:n1n2}. For the direction ``if" consider the problem $(\Nset,\Xset)^2,$ where in $\Xset$ no three points are collinear and no six points belong to a conic. Then according to Theorem  \ref{th:2n+2} the problem $(\Nset,\Xset)$ is $n$-solvable. 
\end{proof}
Consider the following exact scheme $\Nset=\{2,2,2,2,2;4\}^2.$ This scheme satisfies the conditions of above corollary, with one exception: $s=5.$ 
According to Corollary \ref{cor:n1n2} the scheme  $\Nset$ is not regular.


\end{document}